\newtheorem{theorem}{Theorem}[section]
\newtheorem{proposition}[theorem]{Proposition}
\newtheorem{lemma}[theorem]{Lemma}
\newtheorem{corollary}[theorem]{Corollary}
\theoremstyle{definition}
\newtheorem{remark}[theorem]{Remark}
\def\nR{{\mathbb{R}}}
\DeclareMathOperator{\pd}{pd}
\DeclareMathOperator{\height}{height}
\DeclareMathOperator{\reg}{reg}
\DeclareMathOperator{\Soc}{Soc}
\DeclareMathOperator{\link}{link}
\def\eb{{\bold e}}
\numberwithin{equation}{section}
\begin{document}

\title[Cameron--Walker graphs]
{Algebraic study on Cameron--Walker graphs}

\author[]{Takayuki Hibi}
\address[]{Department of Pure and Applied Mathematics,
Graduate School of Information Science and Technology,
Osaka University,
Toyonaka, Osaka 560-0043, Japan}
\email{hibi@math.sci.osaka-u.ac.jp}
\author[]{Akihiro Higashitani}
\address[]{
Department of Mathematics, Graduate School of Science, 
Kyoto University, Kitashirakawa-Oiwake cho, Sakyo-ku, Kyoto 606-8502, Japan}
\email{ahigashi@math.kyoto-u.ac.jp}
\author[]{Kyouko Kimura}
\address[]{Department of Mathematics, Graduate School of Science, 
Shizuoka University,
836 Ohya, Suruga-ku, Shizuoka 422-8529, Japan}
\email{skkimur@ipc.shizuoka.ac.jp}
\author[]{Augustine B. O'Keefe}
\address[]{Department of Mathematics,
University of Kentucky, 
Lexington, KY 40506, U.S.A}
\email{augustine.okeefe@uky.edu}

\keywords{finite graph, edge ideal, Cameron--Walker graph, 
unmixed graph, Cohen--Macaulay graph, Gorenstein graph, sequentially Cohen--Macaulay graph}
\subjclass[2010]{05E40, 13H10}

\begin{abstract}
Let $G$ be a finite simple graph on 
$[n]$ and $I(G) \subset S$ the edge ideal of $G$,
where
$S = K[x_{1}, \ldots, x_{n}]$ is the polynomial ring 
over a field $K$.
Let $m(G)$ denote the maximum size of matchings of $G$
and $im(G)$ that of induced matchings of $G$. 
It is known that 
$im(G) \leq \reg(S/I(G)) \leq m(G)$, where
$\reg(S/I(G))$ is the Castelnuovo--Mumford regularity of $S/I(G)$.
Cameron and Walker succeeded in classifying the finite connected simple graphs
$G$ with $im(G) = m(G)$.
We say that a finite connected simple graph $G$ is 
a Cameron--Walker graph if $im(G) = m(G)$ and if
$G$ is neither a star nor a star triangle. 
In the present paper, we study Cameron--Walker graphs
from a viewpoint of commutative algebra.
First, we prove that a Cameron--Walker graph $G$ is unmixed 
if and only if $G$ is Cohen--Macaulay and classify all
Cohen--Macaulay Cameron--Walker graphs.
Second, we prove that there is no Gorenstein Cameron--Walker graph. 
Finally, we prove that every Cameron--Walker graph
is sequentially Cohen--Macaulay. 
\end{abstract}
\maketitle
\section*{Introduction}
Recently, edge ideals of finite simple graphs have been studied by many authors
from viewpoints of computational commutative algebra and combinatorics; 
see \cite{HaTuylsurvey, MV, Villarreal}, and their references. 

Let $[n] = \{ 1, \ldots, n\}$ be a vertex set and
$G$ a finite simple graph on $[n]$
with $E(G)$ its edge set.  (A simple graph
is a graph with no loop and no multiple edge.)
Let $S = K[x_{1}, \ldots, x_{n}]$ denote 
the polynomial ring in $n$ variables over a field $K$. 
The {\em edge ideal} (\cite[p.\  156]{HHgtm260}) of $G$ is the monomial ideal
$I(G)$ of $S$ generated by those monomials $x_{i}x_{j}$ with
$\{ i, j \} \in E(G)$, viz.,
\[
I(G) = ( \, x_{i}x_{j} \, : \, \{ i, j \} \in E(G) \, ) \, \subset \, S.
\]
One of the research topics on $I(G)$ is the computation of
the Castelnuovo--Mumford regularity $\reg(S/I(G))$ 
(\cite[p.\  48]{HHgtm260}) of $S/I(G)$ 
in terms of the invariants of $G$. 

Recall that a subset $M$ of $E(G)$ is a {\em matching} of $G$
if, for $e$ and $e'$ belonging to $M$ with $e \neq e'$, one has $e \cap e' = \emptyset$.
The {\em matching number} $m(G)$ of $G$ is the maximum size of matchings of $G$.  
A matching $M$ of $G$ is called an {\em induced matching} of $G$ if, 
for $e$ and $e'$ belonging to $M$ with $e \neq e'$, 
there is no $f \in E(G)$
with $f \cap e \neq \emptyset$ and $e' \cap f \neq \emptyset$.
Let $im(G)$ denote the maximum size of induced matchings of $G$. 

For example, if $G = K_{n}$, a complete graph on $[n]$, then
$im(G) = 1$ and $m(G) = \lfloor n/2 \rfloor$.  
If $G = K_{m,n}$ ($m \leq n$), a complete bipartite graph
with vertex partition $[n] \sqcup [m]$, 
then $im(G) = 1$ and $m(G) = m$.
If $G$ is the Petersen graph, then 
$im(G) = 3$ and $m(G) = 5$.

It is known (\cite[Lemma 2.2]{Katzman} and \cite[Theorem 6.7]{HaTuyl08})
that
\[
im(G) \leq \reg(S/I(G)) \leq m(G).
\]
One has $\reg(S/I(G)) = im(G)$
for, e.g., chordal graphs,
unmixed bipartite graphs 
and sequentially Cohen--Macaulay bipartite graphs
(\cite{HaTuyl08, Kummini, VanTuyl}; 
see also \cite{FHVT, MMCRTY, Woodroofe10, Zheng}). 

Cameron and Walker (\cite[Theorem 1]{CW}) gave a classification 
of the finite connected simple graphs
$G$ with $im(G) = m(G)$, although there is a mistake; 
see Remark \ref{chuu} below. 
By modifying their result slightly, we see that 
a finite connected simple graph $G$ satisfies 
$im(G) = m(G)$ if and only if $G$ is 
one of the following graphs: 
  \begin{itemize}
  \item a star; 
  \item a star triangle; 
  \item a finite graph consisting of a connected bipartite graph 
    with vertex partition 
    $[n] \sqcup [m]$ such that there is at least one leaf edge 
    attached to each vertex
    $i \in [n]$ and that there may be possibly some pendant triangles 
    attached to each vertex $j \in [m]$. 
  \end{itemize}
Here a star triangle is a graph 
joining some triangles at one common vertex, 
e.g., the graph on $\{1,\ldots,7\}$ with the edges 
$\{1,2\}$, $\{1,3\}$, $\{2,3\}$, $\{1,4\}$, $\{1,5\}$, $\{4,5\}$, $\{1,6\}$, $\{1,7\}$, $\{6,7\}$ is a star triangle. 
A {\em leaf} is a vertex of degree $1$ and a {\em leaf edge} is an edge 
meeting a leaf. Also a {\em pendant triangle} is a triangle 
whose two vertices have degree $2$ 
and the rest vertex has degree more than $2$. 
We say that a finite connected simple graph $G$ is 
a {\em Cameron--Walker graph} if $im(G) = m(G)$ and if 
$G$ is neither a star nor a star triangle. For example, 

\begin{center}
  \begin{picture}(300,150)(10,-10)
  \put(100,40){\circle*{5}}
  \put(150,40){\circle*{5}}
  \put(200,40){\circle*{5}}
  \put(250,40){\circle*{5}}
  \put(125,90){\circle*{5}}
  \put(175,90){\circle*{5}}
  \put(225,90){\circle*{5}}
  \put(85,120){\circle*{5}}
  \put(115,120){\circle*{5}}
  \put(145,120){\circle*{5}}
  \put(175,120){\circle*{5}}
  \put(225,120){\circle*{5}}
  \put(60,20){\circle*{5}}
  \put(90,0){\circle*{5}}
  \put(110,0){\circle*{5}}
  \put(140,20){\circle*{5}}
  \put(185,10){\circle*{5}}
  \put(215,10){\circle*{5}}
  \put(100,40){\line(1,2){25}}
  \put(150,40){\line(1,2){25}}
  \put(150,40){\line(3,2){75}}
  \put(200,40){\line(-3,2){75}}
  \put(200,40){\line(-1,2){25}}
  \put(200,40){\line(1,2){25}}
  \put(250,40){\line(-1,2){25}}
  \put(125,90){\line(-4,3){40}}
  \put(125,90){\line(-1,3){10}}
  \put(125,90){\line(2,3){20}}
  \put(175,90){\line(0,1){30}}
  \put(225,90){\line(0,1){30}}
  \put(100,40){\line(-2,-1){40}}
  \put(100,40){\line(-1,-4){10}}
  \put(60,20){\line(3,-2){30}}
  \put(100,40){\line(1,-4){10}}
  \put(100,40){\line(2,-1){40}}
  \put(110,0){\line(3,2){30}}
  \put(200,40){\line(-1,-2){15}}
  \put(200,40){\line(1,-2){15}}
  \put(185,10){\line(1,0){30}}
\end{picture}

{\bf Figure 1} (Cameron--Walker graph)

\bigskip

\end{center}
is a Cameron--Walker graph.

\begin{remark}\label{chuu}
  The original result of Cameron--Walker \cite[Theorem 1]{CW} 
  claimed ``a triangle'' instead of ``a star triangle'' 
  in the above classification. 

  \par
  The reason why we claimed differently is that the ``only if'' part of \cite[Theorem 1]{CW} is a little wrong; 
  concretely, in the second paragraph in the proof of Theorem 1 (Only if) 
  \cite[p.\  54]{CW}. 
  Their argument asserted that 
  when we delete all pendant triangles of $G$, 
  we get a connected bipartite graph $H$. 
  However $H$ is possibly an isolated vertex; this case was forgotten, 
  and in such case, $G$ should be a star triangle. 
  Indeed a star triangle $G$ also satisfies $im(G)=m(G)$. 
\end{remark}

In the present paper, we study Cameron--Walker graphs
from a viewpoint of commutative algebra.
One of the main problems is which Cameron--Walker graphs
are Cohen--Macaulay.

Let $G$ be a finite simple graph on $[n]$.  
A {\em vertex cover} of $G$ is a subset $C$ of $[n]$ such that 
$C \cap e \neq \emptyset$ for all $e \in E(G)$.
A vertex cover $C$ is called {\em minimal} 
if no proper subset of $C$ is a vertex cover of $G$. 
A finite simple graph $G$ is called {\em unmixed} if all
minimal vertex covers of $G$ have the same cardinality. 
A finite simple graph $G$ is {\em Cohen--Macaulay} if
$S/I(G)$ is Cohen--Macaulay. 
Every Cohen--Macaulay graph is unmixed (\cite[Lemma 9.1.10]{HHgtm260}).
A finite simple graph $G$ is called 
{\em vertex decomposable}, {\em shellable} or {\em sequentially Cohen--Macaulay} 
if the simplicial complex $\Delta({\overline G})$ 
is vertex decomposable, shellable or sequentially Cohen--Macaulay, respectively (\cite[p.\ 144]{HHgtm260}),
where ${\overline G}$ is the complementary graph of $G$  (\cite[p.\ 153]{HHgtm260}) 
and $\Delta({\overline G})$ is the clique complex (\cite[p.\ 155]{HHgtm260}) of ${\overline G}$.  
Every vertex decomposable graph is shellable (\cite[Theorem 11.3]{BW}) and 
every shellable graph is sequentially Cohen--Macaulay (\cite[Corollary 8.2.19]{HHgtm260}). 
Note that $G$ is unmixed if and only if $\Delta(\overline{G})$ is pure. 

In Theorem \ref{Kimura}, 
we prove that, for a Cameron--Walker graph $G$, 
the following five conditions are equivalent: 
  \begin{itemize}
  \item $G$ is unmixed; 
  \item $G$ is Cohen--Macaulay; 
  \item $G$ is unmixed and shellable; 
  \item $G$ is unmixed and vertex decomposable; 
  \item $G$ consists of a connected bipartite graph with vertex partition 
    $[n] \sqcup [m]$ such that 
    there is exactly one leaf edge attached to each vertex
    $i \in [n]$ and that there is exactly one pendant triangle
    attached to each vertex $j \in [m]$.
  \end{itemize}

When $G$ is Cohen--Macaulay, we call the Cohen--Macaulay type of $S/I(G)$ 
the Cohen--Macaulay type of $G$. 
A finite simple graph $G$ is called {\em Gorenstein} if $S/I(G)$ is Gorenstein. 
Note that $G$ is Gorenstein if and only if $G$ is Cohen--Macaulay 
with Cohen--Macaulay type 1. 
We also consider the problem which Cameron--Walker graphs are Gorenstein. 
The answer is that there is no Gorenstein Cameron--Walker graph. 
We draw this conclusion by computing 
the Cohen--Macaulay types of 
all Cohen--Macaulay Cameron--Walker graphs (Theorem \ref{GorCWgraph}). 

\par
In addition, we also consider the problem which Cameron--Walker graphs are 
sequentially Cohen--Macaulay. 
In Theorem \ref{seqCM}, we prove that every Cameron--Walker graph is 
sequentially Cohen--Macaulay. Actually, it is vertex decomposable 
and thus, shellable. 
We also give a shelling for a Cameron--Walker graph 
whose supporting connected bipartite graph is a complete bipartite graph 
(Proposition \ref{shellable}).

\bigskip

\noindent
\textbf{Acknowledgments}:
The authors are grateful to Seyed Amin Seyed Fakhari, Russ Woodroofe 
and Siamak Yassemi for giving them helpful comments on the first version of this paper. 
The authors would also like to appreciate the anonymous referee 
for suggesting the crucial ideas of the proof of Theorem \ref{GorCWgraph}. 

\par
The second author is partially supported by JSPS Research Fellowship for Young Scientists. 
The third author is partially supported by JSPS Grant-in-Aid 
for Young Scientists (B) 24740008.

\section{Cohen--Macaulay Cameron--Walker graphs}
In this section, we classify all Cohen--Macaulay Cameron--Walker graphs. 
It will turn out that such graphs are of the form 
that a connected bipartite graph each of whose vertex has 
exactly one leaf edge ($K_2$) or exactly one pendant triangle ($K_3$). 
We will first prove that more generally, for a finite simple graph $G$, 
the graph obtained by attaching complete graphs to each vertex of 
$G$ is unmixed and vertex decomposable, in particular, Cohen--Macaulay. 
This is a generalization of the result 
by Villarreal \cite[Proposition 2.2]{Villarreal90} and Dochtermann and Engstr\"om \cite[Theorem 4.4]{DE}. 

We recall the definition of a 
vertex decomposable simplicial complex. 
For a simplicial complex $\Delta$ and its vertex $x$, 
let $\Delta \setminus x=\{\sigma \in \Delta : x \not\in \sigma\}$ and 
$\link_\Delta(x)=\{\sigma \in \Delta : \sigma \cap \{x\} = \emptyset, \sigma \cup \{x\} \in \Delta\}$. 
A simplicial complex $\Delta$ is called {\em vertex decomposable} if $\Delta$ is a simplex, 
or if there exists a vertex $x$ of $\Delta$ such that 
\begin{itemize}
\item[(i)] $\Delta \setminus x$ and $\link_\Delta(x)$ are vertex decomposable and 
\item[(ii)] no face of $\link_\Delta(x)$ is a facet of $\Delta \setminus x$. 
\end{itemize}

We also recall the definition of a 
shellable simplicial complex. 
A simplicial complex $\Delta$ is called \textit{shellable} if all of its facets can be listed \[F_1,\ldots,F_s\] in such a way that 
\[\left( \bigcup_{j=1}^{i-1} \langle F_j \rangle \right) \cap \langle F_i \rangle =\bigcup_{j=1}^{i-1} \langle F_j \cap F_i \rangle\]
is a pure simplicial complex of dimension $\dim F_i -1$ for every $1 < i \leq s$. 
Here $\langle F_i \rangle=\{ \sigma \in \Delta : \sigma \subset F_i\}$. 
When this is the case, we call the order $F_1, \ldots, F_s$ a 
\textit{shelling} of $\Delta$. 

For simplicial complexes, the following implications are known: 
\begin{itemize}
\item vertex decomposable $\Longrightarrow$ shellable $\Longrightarrow$ sequentially Cohen--Macaulay; 
\item pure and vertex decomposable $\Longrightarrow$ pure and shellable $\Longrightarrow$ Cohen--Macaulay. 
\end{itemize}

\par
Let $G$ be a graph on the vertex set $V$. 
For $W \subset V$, we denote by $G \setminus W$, the induced subgraph of $G$ 
on $V \setminus W$. For a vertex $v \in V$, we use notation $G \setminus v$ instead of $G \setminus \{v\}$. 
For $v \in V$, let $N(v)$ (or $N_G (v)$) denote the neighbourhood of $v$ in $G$ 
and let $N[v]=N(v) \cup \{v\}$.

\par
Let $G$ be a finite simple graph on a vertex set $V$. 
Villarreal \cite[Proposition 2.2]{Villarreal90} proved that 
the graph obtained from $G$ by adding a whisker to each vertex is Cohen--Macaulay. 
Dochtermann and Engstr\"om \cite[Theorem 4.4]{DE} proved that 
such a graph is unmixed and vertex decomposable. 
Adding a whisker to each vertex is the same as saying that 
attaching the complete graph $K_2$ to each vertex. 
We generalize the above results as follows. 
\begin{theorem}
  \label{CM}
  Let $G$ be a finite simple graph on a vertex set 
  $V = \{ x_1, \ldots, x_n \}$. 
  Let $k_1, \ldots, k_n \geq 2$ be integers. 
  Then the graph $G'$ obtained from $G$ 
  by attaching the complete graph $K_{k_i}$ 
  to $x_i$ for $i=1, \ldots, n$ is unmixed and vertex decomposable. 
  In particular, $G'$ is shellable and Cohen--Macaulay. 
\end{theorem}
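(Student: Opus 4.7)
The plan is to prove unmixedness and vertex decomposability of $G'$ separately; shellability and Cohen--Macaulayness then follow automatically from the implications recalled before the theorem statement (vertex decomposable $\Rightarrow$ shellable; pure $+$ shellable $\Rightarrow$ Cohen--Macaulay).

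For unmixedness, I examine how a minimal vertex cover $C$ of $G'$ meets each attached clique. The $k_i$-clique on $\{x_i,y_i^{(1)},\ldots,y_i^{(k_i-1)}\}$ requires at least $k_i-1$ of its vertices to lie in $C$. Conversely, no minimal $C$ can contain all $k_i$ of them: each new vertex $y_i^{(j)}$ has all its neighbors inside that clique, so once the remaining $k_i-1$ clique vertices lie in $C$ the vertex $y_i^{(j)}$ becomes redundant. Thus every minimal vertex cover meets the $i$-th clique in exactly $k_i-1$ vertices, yielding the constant cardinality $|C|=\sum_i(k_i-1)$.

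For vertex decomposability I induct on $n=|V(G)|$, with trivial base case $n=0$, and apply the definition with $x_1\in V(G)$ as the distinguished vertex of $\Ind(G')$. The inclusion $N_{G'}[y_1^{(1)}]=\{x_1,y_1^{(1)},\ldots,y_1^{(k_1-1)}\}\subseteq N_{G'}[x_1]$ shows that for any face $S$ of $\link_{\Ind(G')}(x_1)$ (equivalently, any independent set $S\subseteq V(G')\setminus N_{G'}[x_1]$), the set $S\cup\{y_1^{(1)}\}$ is still independent in $G'$ and so is a face of $\Ind(G')\setminus x_1$ strictly containing $S$; hence $S$ cannot be a facet there, verifying condition (ii).

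Next I need that $\Ind(G')\setminus x_1$ and $\link_{\Ind(G')}(x_1)$ are vertex decomposable, equivalently that the graphs $G'\setminus x_1$ and $G'\setminus N_{G'}[x_1]$ both have vertex decomposable independence complexes. Using that each new vertex $y_i^{(j)}$ has all its neighbors within its own attached clique, both deletions split cleanly into disjoint unions. Concretely, $G'\setminus x_1$ is the disjoint union of the isolated clique $K_{k_1-1}$ on $\{y_1^{(1)},\ldots,y_1^{(k_1-1)}\}$ with the graph obtained from $G\setminus x_1$ by attaching $K_{k_i}$ to each $x_i$, $i\geq 2$; likewise $G'\setminus N_{G'}[x_1]$ is the disjoint union of the cliques $K_{k_i-1}$ for $i\in N_G(x_1)$ together with the graph obtained from $G\setminus N_G[x_1]$ by attaching $K_{k_i}$ to each remaining $x_i$. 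In either decomposition the main component has the form of the theorem on strictly fewer original vertices and is vertex decomposable by the inductive hypothesis; the complete-graph components are trivially so. Since the disjoint union of graphs corresponds to the join of their independence complexes, and joins of vertex decomposable complexes are vertex decomposable, both deletions are vertex decomposable, closing the induction. The main obstacle is the structural book-keeping for the two deletion decompositions; once one records that a new vertex $y_i^{(j)}$ has no neighbor outside its own attached clique, the shedding argument is essentially automatic and the induction proceeds with no further difficulty.
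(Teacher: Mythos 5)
Your proof is correct, but it takes a genuinely different route from the paper's. The paper deduces the theorem from Cook and Nagel's clique-vertex-partition result (Lemma \ref{vd}): it forms the intermediate graph $H$ by attaching $K_{k_i-1}$ to each $x_i$, observes that the attached cliques $W_i=\{w_1^{(i)},\ldots,w_{k_i-1}^{(i)}\}$ (with $w_1^{(i)}=x_i$) form a clique vertex-partition $\pi$ of $H$ with $G'=H^\pi$, and then cites \cite[Theorem 3.3]{CN} to get unmixedness and vertex decomposability in one stroke. You instead argue from scratch: unmixedness by showing that every minimal vertex cover meets the $i$-th attached clique in exactly $k_i-1$ vertices (at least $k_i-1$ since a cover can omit at most one vertex of a clique; at most $k_i-1$ since a new vertex whose whole neighbourhood already lies in the cover is redundant), which pins the cardinality at $\sum_i(k_i-1)$ because the cliques partition $V(G')$; and vertex decomposability by induction on $|V(G)|$ with $x_1$ as shedding vertex, where the inclusion $N_{G'}[y_1^{(1)}]\subseteq N_{G'}[x_1]$ yields the shedding condition and both $G'\setminus x_1$ and $G'\setminus N_{G'}[x_1]$ split into isolated cliques plus a smaller instance of the same construction. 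Both arguments are sound; the paper's is shorter because it outsources the work to an existing general theorem, while yours is self-contained and in effect reproves the special case of Cook--Nagel that is needed here. The one step you should explicitly source or justify is the passage to connected components (equivalently, that the join of vertex decomposable independence complexes is vertex decomposable); the paper itself invokes \cite[Lemma 20]{Woodroofe09} for exactly this in the proof of Theorem \ref{seqCM}.
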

Before proving Theorem \ref{CM}, we recall the result by Cook and Nagel \cite[Theorem 3.3]{CN}, 
which is another generalization of \cite[Proposition 2.2]{Villarreal90} and \cite[Theorem 4.4]{DE}. 
A {\em clique vertex-partition} of $G$ is a set $\pi=\{W_1,\ldots,W_t\}$ 
of disjoint (possibly empty) cliques of $G$ such that their disjoint union forms $V$. 
For a clique vertex-partition $\pi=\{W_1,\ldots,W_t\}$, let $G^\pi$ denotes the graph 
on the vertex set $V \cup \{w_1,\ldots,w_t\}$ with the edge set 
$E(G) \cup \bigcup_{i=1}^t \{\{v,w_i\} : v \in W_i\}$. 
\begin{lemma}[Cook and Nagel \cite{CN}]\label{vd}
Let $\pi=\{W_1,\ldots,W_t\}$ be a clique vertex-partition of $G$. 
Then $G^\pi$ is unmixed and vertex decomposable. 
\end{lemma}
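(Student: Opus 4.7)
My plan is to prove unmixedness and vertex decomposability separately, both by exploiting the fact that for each $i$ the set $W_i \cup \{w_i\}$ is a clique of size $k_i+1$ in $G^\pi$, where $k_i = |W_i|$.

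For unmixedness, any vertex cover $C$ of $G^\pi$ must contain at least $k_i$ of the $k_i+1$ vertices of each clique $W_i \cup \{w_i\}$, so $|C| \geq \sum_i k_i = |V|$. Conversely, $V$ itself is a vertex cover (every edge of $G^\pi$ has an endpoint in $V$), so this bound is sharp. If some minimal vertex cover $C$ had $|C| > |V|$, then by pigeonhole, for some $i$ the entire clique $W_i \cup \{w_i\}$ would lie in $C$; but then $C \setminus \{w_i\}$ is still a vertex cover (the remaining $k_i$ vertices of $W_i$ cover all edges inside that clique, and no edge of $G$ is affected), contradicting minimality. Hence every minimal vertex cover has size exactly $|V|$.

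For vertex decomposability I would induct on $|V|$. The base case $V = \emptyset$ is trivial since then $\Delta(\overline{G^\pi})$ is a simplex on the isolated whisker vertices. For the inductive step, pick any non-empty $W_i$ (one exists since the $W_i$ partition $V$); say $W_1$, and fix any $v \in W_1$. I claim $v$ is a shedding vertex, meaning every maximal independent set $F$ of $G^\pi \setminus v$ meets $N_{G^\pi}(v)$. If $w_1 \in F$, this is immediate because $v \in W_1$ forces $w_1 \in N_{G^\pi}(v)$. Otherwise, by maximality, $F \cup \{w_1\}$ contains an edge, and since $N_{G^\pi}(w_1) = W_1$ this forces some $u \in F \cap W_1$; as $W_1$ is a clique and $u \neq v$, we get $u \in F \cap N_{G^\pi}(v)$.

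It remains to verify that the deletion $\Delta(\overline{G^\pi}) \setminus v$ and the link of $v$ in $\Delta(\overline{G^\pi})$ are themselves vertex decomposable, and for this I show they both belong to the same class of graphs and invoke induction. The deletion corresponds to $(G \setminus v)^{\pi'}$ where $\pi' = \{W_1 \setminus \{v\}, W_2, \ldots, W_t\}$ is still a clique vertex-partition of $G \setminus v$. For the link, since $W_1 \subseteq N_G[v]$ (being a clique containing $v$), deleting $N_{G^\pi}[v]$ removes $w_1$ together with all of $W_1$, leaving $(G \setminus N_G[v])^{\pi''}$ with $\pi'' = \{W_j \cap (V \setminus N_G[v]) : 2 \leq j \leq t\}$, which is a clique vertex-partition of $G \setminus N_G[v]$. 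Both underlying graphs have strictly fewer vertices than $G$, so induction closes the argument. The main subtlety I anticipate is verifying carefully that the link really is of the required form---in particular that $w_1$ is indeed deleted and the other whiskers still match a clique partition of the shrunken base graph---and cleanly handling the harmless case that some $W_j$ is empty, so that $w_j$ is an isolated vertex; this simply turns $\Delta(\overline{G^\pi})$ into a cone, which is vertex decomposable whenever its base is.
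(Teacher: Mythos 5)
Your argument is correct. Note that the paper itself gives no proof of this lemma: it is quoted verbatim from Cook and Nagel \cite[Theorem 3.3]{CN}, so there is no in-paper argument to compare against line by line. What you have written is a sound self-contained proof, and it follows the natural (and, as far as I can tell, essentially the standard Cook--Nagel) route. For unmixedness, the clique-counting bound $|C|\geq\sum_i k_i=|V|$ together with the pigeonhole/minimality argument is clean and handles empty blocks $W_i$ (isolated $w_i$) correctly, since a minimal cover never retains a vertex all of whose neighbours it already contains. For vertex decomposability, your shedding-vertex verification for $v\in W_1$ matches condition (ii)' of Lemma \ref{vd2}: ``every maximal independent set of $G^\pi\setminus v$ meets $N_{G^\pi}(v)$'' is equivalent to ``no independent set of $G^\pi\setminus N[v]$ is maximal in $G^\pi\setminus v$,'' and both cases ($w_1\in F$ and $w_1\notin F$) are handled. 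The key structural observation --- that both the deletion and the link are again of the form $H^\rho$ for a clique vertex-partition $\rho$ of a strictly smaller base graph (using $W_1\subseteq N_G[v]$ so that $w_1$ disappears from the link) --- is exactly what makes the induction close. The only cosmetic caveat is that $\pi''$ should really be treated as an indexed family rather than a set, since several blocks $W_j\cap(V\setminus N_G[v])$ may become empty simultaneously while their apex vertices $w_j$ all survive; this is a looseness already present in the paper's definition and does not affect the argument.
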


Now we prove Theorem \ref{CM}. 
\begin{proof}[Proof of Theorem \ref{CM}]
For $i=1,\ldots,n$, 
let $\{w_j^{(i)} : j=1,\ldots,k_i\}$ be the vertex set of the attached complete graph $K_{k_i}$, 
where $w_1^{(i)}=x_i$. 
We consider the graph $H$ obtained from $G$ by attaching the complete graph $K_{k_i-1}$ 
to $x_i$ for $i=1,\ldots,n$ whose vertex set is $\{w_1^{(i)},\ldots,w_{k_i-1}^{(i)}\}$, 
that is, $H=G' \setminus \{w_{k_i}^{(i)} : i=1,\ldots,n\}$. 
Then $H$ has a clique vertex-partition $\pi=\{W_1,\ldots,W_n\}$, 
where $W_i=\{w_1^{(i)},\ldots,w_{k_i-1}^{(i)}\}$, and $G'=H^\pi$. 
By Lemma \ref{vd}, we conclude that $G'=H^\pi$ is unmixed and vertex decomposable, as desired. 
\end{proof}

\par
Now we classify all Cohen--Macaulay Cameron--Walker graphs. 
Actually we have the following theorem for Cameron--Walker graphs. 
\begin{theorem}
\label{Kimura}
  For a Cameron--Walker graph $G$, 
  the following five conditions are equivalent: 
  \begin{enumerate}
  \item $G$ is unmixed. 
  \item $G$ is Cohen--Macaulay. 
  \item $G$ is unmixed and shellable. 
  \item $G$ is unmixed and vertex decomposable. 
  \item $G$ consists of a connected bipartite graph with vertex partition 
    $[n] \sqcup [m]$ such that 
    there is exactly one leaf edge attached to each vertex
    $i \in [n]$ and that there is exactly one pendant triangle
    attached to each vertex $j \in [m]$.
  \end{enumerate}
\end{theorem}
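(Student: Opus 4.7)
The plan is to establish the cycle $(5) \Rightarrow (4) \Rightarrow (3) \Rightarrow (2) \Rightarrow (1) \Rightarrow (5)$. The first four implications are formal: a graph satisfying (5) is exactly the graph obtained from its underlying connected bipartite graph $B$ on $[n] \sqcup [m]$ by attaching $K_2$ to each vertex of $[n]$ and $K_3$ to each vertex of $[m]$, so Theorem \ref{CM} (with $k_i = 2$ for $i \in [n]$ and $k_j = 3$ for $j \in [m]$) yields unmixedness and vertex decomposability, giving $(5) \Rightarrow (4)$; then $(4) \Rightarrow (3) \Rightarrow (2) \Rightarrow (1)$ follows from the standard chain ``pure vertex decomposable $\Rightarrow$ pure shellable $\Rightarrow$ Cohen--Macaulay $\Rightarrow$ unmixed'' recorded earlier in the paper.

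The substance lies in $(1) \Rightarrow (5)$. Let $G$ be an unmixed Cameron--Walker graph, with $\ell_i \geq 1$ leaves at each $i \in [n]$ and $t_j \geq 0$ pendant triangles at each $j \in [m]$. Set $d_i = |N(i) \cap [m]|$, $d'_j = |N(j) \cap [n]|$, and $T_i = \sum_{j \in N(i) \cap [m]} t_j$. My approach is to write down four explicit minimal vertex covers of $G$ and extract the size equations forced by unmixedness:
\begin{itemize}
\item $C_1 = [n] \cup \{\text{both non-apex vertices of every pendant triangle}\}$, of size $n + 2\sum_j t_j$;
\item $C_2 = [m] \cup \{\text{one non-apex vertex per triangle}\} \cup \{\text{all leaves}\}$, of size $m + \sum_j t_j + \sum_i \ell_i$, with minimality relying on $B$ being connected so every $j \in [m]$ has a bipartite neighbor;
\item for each $j_0 \in [m]$ with $t_{j_0} \geq 1$, the cover $C_3 = [n] \cup \{j_0\} \cup \{\text{one non-apex per triangle at } j_0\} \cup \{\text{both non-apex per triangle at each other } j\}$, of size $n + 1 + 2\sum_j t_j - t_{j_0}$ (the assumption $t_{j_0} \geq 1$ is exactly what makes $j_0$ irredundant);
\item for each $i_0 \in [n]$, the cover $C_4 = ([n] \setminus \{i_0\}) \cup \{\text{leaves of } i_0\} \cup (N(i_0) \cap [m]) \cup \{\text{one non-apex per triangle at each } j \in N(i_0) \cap [m]\} \cup \{\text{both non-apex per triangle at each } j \notin N(i_0) \cap [m]\}$, of size $n - 1 + \ell_{i_0} + d_{i_0} - T_{i_0} + 2\sum_j t_j$.
\end{itemize}
Each cover is minimal by a routine edge-by-edge check.

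Equating $|C_1|$ with $|C_2|$, with $|C_3|$ (for every valid $j_0$), and with $|C_4|$ (for every $i_0$) gives the three identities
\[
\sum_i \ell_i = n - m + \sum_j t_j, \qquad t_{j_0} = 1 \text{ whenever } t_{j_0} \geq 1, \qquad \ell_{i_0} = 1 + T_{i_0} - d_{i_0}.
\]
Summing the last identity over $i_0$ and using $\sum_i T_i = \sum_j t_j d'_j$ together with $\sum_i d_i = \sum_j d'_j$, then subtracting the first, collapses everything into the single telescoping relation
\[
\sum_{j \in [m]} (t_j - 1)(d'_j - 1) = 0.
\]
The middle identity forces $t_j \in \{0, 1\}$, so every summand is non-positive and $t_j = 0$ must imply $d'_j = 1$. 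If some $t_{j_0} = 0$, its unique bipartite neighbor $i_0$ then satisfies $T_{i_0} \leq d_{i_0} - 1$ (every other $t_j$ is at most $1$), and the third identity gives $\ell_{i_0} \leq 0$, contradicting $\ell_{i_0} \geq 1$. Hence $t_j = 1$ for every $j$, and substituting back yields $\ell_{i_0} = 1 + d_{i_0} - d_{i_0} = 1$ for every $i_0$, which is precisely the structure described in (5).

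The main obstacle is the coordinated design of the four vertex covers so that all minimality conditions can be verified simultaneously and so that their size equations combine into the clean telescoping identity $\sum_j (t_j - 1)(d'_j - 1) = 0$; once this identity is in hand, the dichotomy $t_j \in \{0,1\}$ together with the $\ell_{i_0}$ identity forces the rigidity required by (5) almost immediately.
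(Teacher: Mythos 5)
Your proposal is correct and follows essentially the same strategy as the paper: establish $(5)\Rightarrow(4)\Rightarrow(3)\Rightarrow(2)\Rightarrow(1)$ formally via Theorem \ref{CM} and the standard implications, then prove $(1)\Rightarrow(5)$ by exhibiting explicit minimal vertex covers and equating their cardinalities. The paper gets by with a leaner version of the same idea --- only three covers (your $C_1$, your $C_2$, and a single global analogue of your $C_3$ taking $[n]\sqcup[m']$ where $[m']$ is the set of triangle-bearing vertices), from which $t=m'$ and $n+t=m+f$ combine with the trivial inequalities $f\geq n$ and $m\geq m'$ to finish without the per-vertex covers $C_4$ or the telescoping identity.
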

\begin{proof}
  $(5) \Longrightarrow (4)$ follows from Theorem \ref{CM}. 
  As mentioned above, 
  $(4) \Longrightarrow (3)$ and $(3) \Longrightarrow (2)$ follow. 
  $(2) \Longrightarrow (1)$ is well known. 
  We prove $(1) \Longrightarrow (5)$. 

  \par
  Since $G$ is a Cameron--Walker graph, 
  $G$ consists of a connected bipartite graph 
  with vertex partition $[n] \sqcup [m]$ such that 
  there is at least one leaf edge attached to each vertex $i \in [n]$ 
  and that there may be possibly some pendant triangles 
  attached to each vertex $j \in [m]$. 
  Assume that there are $f$ leaves in total and $t$ total pendant triangles. 
  Also assume that out of each vertex $j \in [m']$ ($m' \leq m$), 
  there is at least one 
  triangle and there is no triangle out of each vertex 
  $j \in \{ m'+1, \ldots, m \}$. 

  \par
  The following three subsets of the vertex set of $G$ 
  are minimal vertex covers of $G$: 
  \begin{enumerate}
  \item[(i)] the subset $[n]$ of the vertex set 
    of the connected bipartite graph and 
    two vertices of degree $2$ of each pendant triangle; 
  \item[(ii)] the subset $[m]$ of the vertex set 
    of the connected bipartite graph, 
    all leaves 
    and exactly one vertex of degree $2$ of each pendant triangle; 
  \item[(iii)] the subset $[n] \sqcup [m']$ of the vertex set 
    of the connected bipartite graph 
    and exactly one vertex of degree $2$ of each pendant triangle. 
  \end{enumerate}
  The cardinalities are (i) $n + 2t$; (ii) $m+f+t$; (iii) $n + m' + t$. 
  Since $G$ is unmixed, these are equal. 
  By (i) and (iii), we have $t=m'$; it follows that there is just one 
  triangle out of each vertex $j \in [m']$. 
  By (i) and (ii), we have $n+t = m+f$, i.e., $m-t = n-f$. 
  Since there is at least one leaf out of each vertex $i \in [n]$, we have 
  $n -f \leq 0$. On the other hand $m-t = m-m' \geq 0$. 
  Therefore we have $m-t = n-f =0$. 
  Then we have the desired assertion. 
\end{proof}


\section{Gorenstein Cameron--Walker graphs} 
In this section, we consider the problem which Cohen--Macaulay Cameron--Walker graph is Gorenstein. 
In order to attack the problem, we compute the Cohen--Macaulay type of a Cohen--Macaulay Cameron--Walker graph. 

\par
Let $G$ be a graph on the vertex set $V$. 
We say that a subset $A \subset V$ is an {\em independent set} if no two vertices in $A$ are adjacent in $G$. 

\par
The following theorem is the main result in this section. 
\begin{theorem}\label{GorCWgraph}
Let $G$ be a Cohen--Macaulay Cameron--Walker graph with $m$ pendant triangles. 
Then the Cohen--Macaulay type of $G$ is equal to $2^m$. In particular, there is no Gorenstein Cameron--Walker graph. 
\end{theorem}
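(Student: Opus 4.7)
The plan is to compute the Cohen--Macaulay type of $S/I(G)$ directly, show it equals $2^m$, and deduce the Gorenstein statement from $m \geq 1$.

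Fix notation from Theorem \ref{Kimura}: write the bipartite core on $V_1 \sqcup V_2$ with $V_1 = \{u_1, \ldots, u_n\}$, $V_2 = \{v_1, \ldots, v_m\}$, let $w_i$ be the leaf at $u_i$, and $\{v_j, a_j, b_j\}$ the pendant triangle at $v_j$. Every facet of $\Delta(\overline G)$ chooses one of $\{u_i, w_i\}$ for each $i$ and one of $\{v_j, a_j, b_j\}$ for each $j$, so $\dim S/I(G) = n+m$ and $\codim S/I(G) = n+2m$.

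The main step I would carry out is to determine the top Betti number of $S/I(G)$ via Hochster's formula
\[
\beta_{n+2m}\bigl(S/I(G)\bigr) \;=\; \sum_{W \subseteq V(G)} \dim_K \widetilde H_{|W| - n - 2m - 1}\bigl(\Delta_W; K\bigr),
\]
with $\Delta = \Delta(\overline G)$, and to show that the nonzero terms are precisely indexed by the $2^m$ binary choices of one vertex from each pair $\{a_j, b_j\}$, each contributing $1$. The guiding heuristic is that the edge ideal of a single triangle $K_3$ already has Cohen--Macaulay type $2$, and this local factor should persist in $G$ because $a_j, b_j$ interact with the rest of the graph only through $v_j$; the choice of whether $a_j$ or $b_j$ lies in $W$ controls the homology of $\Delta_W$. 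The main obstacle is the combinatorial bookkeeping: one must enumerate the relevant subsets $W$ and show by a careful analysis of $\Delta_W$ (probably by exhibiting either a cone point or a deformation retract coming from the leaves $w_i$) that all other $W$ contribute zero. An alternative to pursue in parallel is an induction on $m$ via the short exact sequence
\[
0 \longrightarrow \bigl(S/(I(G):v_j)\bigr)(-1) \xrightarrow{\,\cdot v_j\,} S/I(G) \longrightarrow S/(I(G), v_j) \longrightarrow 0,
\]
whose end terms correspond to the graphs obtained by deleting either $v_j$ or $v_j$ together with its neighborhood; the long exact sequence in $\Ext_S^\bullet(-,S)$ then relates the canonical modules. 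The delicate point here is that neither end of the sequence is itself a Cameron--Walker graph in general, so one must argue that the Cohen--Macaulay type information remains tractable.

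Once $\text{type}(S/I(G)) = 2^m$ is established, Gorensteinness fails: a Cohen--Macaulay Cameron--Walker graph has $m \geq 1$ (if $m = 0$, Theorem \ref{Kimura} forces the bipartite core to have $V_2 = \emptyset$ and then connectedness forces $|V_1| = 1$, producing $G = K_2$, which is a star and hence not a Cameron--Walker graph), so $\text{type}(S/I(G)) \geq 2$ and no Cameron--Walker graph is Gorenstein.
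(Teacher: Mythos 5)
The core of this theorem is the computation $\mathrm{type}(S/I(G)) = 2^m$, and your proposal does not actually carry it out: both routes you describe (Hochster's formula for the top Betti number, and induction via the exact sequence $0 \to (S/(I(G):v_j))(-1) \to S/I(G) \to S/(I(G),v_j) \to 0$) are left at the level of a plan, with the decisive step explicitly deferred ("the main obstacle is the combinatorial bookkeeping"). Worse, the combinatorial answer you assert for the Hochster computation appears to be wrong. Take the smallest case $n=m=1$: vertices $x_1,v_1,y_1,z_1,w_1$ with edges $x_1v_1$, $x_1y_1$, $y_1z_1$, $y_1w_1$, $z_1w_1$, so $\pd = n+2m = 3$. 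A direct check of $\widetilde H_{|W|-4}(\Delta_W)$ over all $W$ shows that the two subsets contributing to $\beta_3$ are $W = V$ (whose independence complex is a connected graph with $5$ vertices and $5$ edges, hence $\widetilde H_1$ of rank $1$) and $W = V\setminus\{v_1\}$ (whose independence complex has the isolated point $y_1$, hence $\widetilde H_0$ of rank $1$); the subsets $V\setminus\{z_1\}$ and $V\setminus\{w_1\}$ singled out by your "one vertex from each pair $\{a_j,b_j\}$" heuristic contribute $0$, since the independence complex of the resulting path is connected. So the total is the correct $2$, but not for the reason you give, and you would need a genuinely new argument to identify and count the contributing $W$ in general. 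The final paragraph (deducing non-Gorensteinness from $m \geq 1$) is correct and matches the paper.

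For comparison, the paper avoids Hochster's formula entirely. It exhibits the explicit linear system of parameters $x_i - v_i$ ($1 \le i \le n$), $w_j - (y_j + z_j)$ ($1 \le j \le m$), which is a regular sequence by Cohen--Macaulayness, and identifies the Artinian reduction with $S'/(J' + I(G'))$ where $S'$ is the polynomial ring on $\{x_i\} \cup \{y_j, z_j\}$, $J'$ is generated by the squares of these variables, and $G'$ is the induced subgraph on those vertices. The socle of this Artinian ring has a monomial basis in bijection with the maximal independent sets of $G'$, and each such set is determined by its intersection with $\{y_1,\ldots,y_m\}$, giving exactly $2^m$. If you want to salvage your approach, you should either adopt this reduction or correctly determine which $W$ carry top homology; as it stands the proof has a genuine gap at its central claim.
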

\begin{proof}
Let $G$ be a Cohen--Macaulay Cameron--Walker graph. 
Then $G$ is a graph described in Theorem 1.3 (5); 
let $[x_1,\ldots,x_n] \sqcup [y_1,\ldots,y_m]$ be the vertex partition 
of the supporting connected bipartite graph of $G$, 
$v_i$ the leaf of $G$ attached to $x_i$ 
and $z_j, w_j$ the two vertices of degree 2 
which form the pendant triangle with $y_j$. 
Then $I(G)$ is an ideal of 
$S=K[\{x_i,v_i: 1 \leq i \leq n\} \cup \{y_j,z_j,w_j : 1 \leq j \leq m\}]$. 
(We identify each vertex of $G$ with the variable of $S$.) 
Since the cardinality of the minimal vertex cover is equal to 
$\height I(G)$, 
it follows from the proof of Theorem 1.3 that $\height I(G)=n+2m$. 
Thus, we obtain that 
$\dim(S/I(G))=\dim S - \height I(G)= n+m$. 


\par
Consider the sequence 
\begin{displaymath}
  {\bf x}=x_1-v_1,\ldots,x_n-v_n,w_1-(y_1+z_1),\ldots,w_m-(y_m+z_m) 
\end{displaymath}
of $n+m$ elements of $S$. Let us consider the polynomial ring 
$S'=K[\{x_i : 1 \leq i \leq n\} \cup \{y_j,z_j : 1 \leq j \leq m \}]$. 
Set 
$$J' :=(x_i^2 : 1 \leq i \leq n) + (y_j^2,z_j^2 : 1 \leq j \leq m) 
\subset S'.$$ 
Also let $G'$ be the induced subgraph of $G$ on 
$\{x_i : 1 \leq i \leq n\} \cup \{y_j,z_j : 1 \leq j \leq m\}$. 
Then $I(G')$ is an ideal of $S'$. 
Modulo the sequence {\bf x}, one has $S/(I(G)+({\bf x})) \cong S'/(J'+I(G'))$, 
and thus $\dim (S/(I(G)+({\bf x})))= 0$. 
Since $\dim (S/I(G))=n+m$ and the sequence {\bf x} consists of $n+m$ elements, 
{\bf x} is a linear system of parameter. 
Hence {\bf x} is a regular sequence of $S/I(G)$ 
because $S/I(G)$ is Cohen--Macaulay.


\par
The Cohen--Macaulay type of $S/I(G)$ coincides with 
$\dim_K \Soc (S/(I(G)+({\bf x})))$ 
(\cite[Proposition A.6.1]{HHgtm260}). 
Therefore, we compute 
$\dim_K \Soc (S/(I(G)+({\bf x}))) = \dim_K \Soc (S'/(J'+I(G')))$. 
Set $T=S'/(J'+I(G'))$. 
Since $\Soc (T)=\{x \in T : {\bf m}x=0\}$, 
where ${\bf m}=(x_i : 1 \leq i \leq n)+(y_j,z_j : 1 \leq j \leq m)$, 
a set of elements $v+J'+I(G')$, 
where $v$ is a monomial in $S'$, 
such that $v \not\in J'+I(G')$ and ${\bf m} v \subset J'+I(G')$ 
forms a basis for the $K$-vector space $\Soc (T)$. 
By Lemma \ref{hodai} below, one can compute $\dim_K \Soc (T)$ by counting the maximal independent sets of $G'$. 
%
%

\par
A maximal independent set $A$ of $G'$ is uniquely determined 
by the intersection $A \cap \{ y_1, \ldots, y_m \}$. 
In fact, for a subset $Y$ of $\{ y_1, \ldots, y_m \}$, 
there exists a unique maximal independent set of $G'$: 
$Y \cup (\{x_1,\ldots,x_n\} \cup \{z_1,\ldots,z_m\} 
\setminus \bigcup_{y \in Y}N(y))$. 
Hence it follows that there are exactly $2^m$ maximal independent sets of $G'$, 
as desired. 
%
%
%
%
%
\end{proof}

\begin{lemma}\label{hodai}
With the same notation as in the proof of Theorem \ref{GorCWgraph}, 
there is a one-to-one correspondence between each monomial $v$ in $S'$ 
such that $v \not\in J'+I(G')$ and ${\bf m} v \subset J'+I(G')$ 
and each maximal independent set of $G'$. 
\end{lemma}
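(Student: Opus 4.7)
The plan is to translate the two algebraic conditions ``$v \notin J'+I(G')$'' and ``$\mathbf{m}v \subset J'+I(G')$'' on a monomial $v \in S'$ into purely combinatorial conditions on the support of $v$, and then read off that the resulting conditions are exactly the definition of a maximal independent set of $G'$.

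First I would observe that since every variable of $S'$ appears squared in $J'$, any monomial that is not squarefree lies in $J'$. Therefore the condition $v \notin J'+I(G')$ forces $v$ to be a squarefree monomial, and we may write $v = \prod_{s \in A} s$ for a uniquely determined subset $A$ of the vertex set $V(G')= \{x_1,\ldots,x_n\} \cup \{y_1,z_1,\ldots,y_m,z_m\}$. Thus the map $v \mapsto A$ is a bijection between the set of squarefree monomials of $S'$ and the set of subsets of $V(G')$, and it only remains to identify which subsets $A$ correspond to monomials satisfying both conditions.

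Next I would analyze the two conditions separately. Since $J'$ is generated by (pure) squares, a squarefree monomial lies in $J'+I(G')$ if and only if it lies in $I(G')$, because $J'+I(G')$ is a monomial ideal and a squarefree monomial belongs to it iff it is a scalar multiple of one of its squarefree generators (and all squarefree generators come from $I(G')$). Consequently, $v \notin J'+I(G')$ is equivalent to saying that $A$ contains no edge of $G'$, i.e., $A$ is an independent set. For the second condition, I would check each generator $s$ of $\mathbf{m}$ in turn: if $s \in A$, then $sv$ contains $s^2$ and hence lies in $J'$ automatically; if $s \notin A$, then $sv$ is again squarefree with support $A \cup \{s\}$, so by the previous observation $sv \in J'+I(G')$ iff $A \cup \{s\}$ contains an edge of $G'$, which (since $A$ itself is independent) happens iff $s$ has a neighbor in $A$.

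Combining the two analyses, the monomials $v$ contributing to $\Soc(T)$ correspond exactly to those subsets $A \subset V(G')$ which are independent and such that every vertex outside $A$ has a neighbor in $A$, i.e., to the maximal independent sets of $G'$. Different squarefree monomials give different cosets modulo $J'+I(G')$, since $J'+I(G')$ is a monomial ideal and distinct squarefree monomials remain linearly independent modulo any monomial ideal that contains neither of them. This yields the desired bijection. The only delicate point I anticipate is being careful that ``maximality of the independent set'' corresponds precisely to the condition ``$sv \in J'+I(G')$ for every single variable $s \notin A$'' (as opposed to only those $s$ in some restricted set), but this follows immediately because $\mathbf{m}$ is generated by \emph{all} the variables of $S'$.
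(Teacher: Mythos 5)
Your proposal is correct and follows essentially the same route as the paper's proof: squarefreeness is forced by $J'$, independence by $v\notin I(G')$, and maximality by testing $sv$ for each variable $s$ outside the support (the paper just presents the two directions of the correspondence separately rather than translating both conditions at once). The only nitpick is that a monomial lies in a monomial ideal iff it is \emph{divisible by} a generator, not iff it is a scalar multiple of one; your conclusion that a squarefree monomial lies in $J'+I(G')$ iff it lies in $I(G')$ is nonetheless correct, since no generator of $J'$ can divide a squarefree monomial.
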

\begin{proof}
Take a monomial $v=s_1\cdots s_\ell$ in $S'$ 
such that $v \not\in J'+I(G')$ and ${\bf m} v \subset J'+I(G')$. 
Then $s_1,\ldots,s_\ell$ are distinct elements 
of $\{x_i : 1 \leq i \leq n\} \cup \{y_j,z_j : 1 \leq j \leq m\}$ 
because $v \not\in J'$. 
Moreover, since $v \not\in I(G')$, 
it follows that $\{s_1,\ldots,s_\ell\}$ is an independent set of $G'$. 
Now we prove that $\{ s_1, \ldots, s_\ell \}$ is maximal. 
Take 
$x \in \{x_i : 1 \leq i \leq n\} \cup \{y_j,z_j : 1 \leq j \leq m\}$ 
with $x \not\in \{s_1,\ldots,s_\ell\}$. 
Since ${\bf m} v \subset J'+I(G')$, we have $xv \in I(G')$. 
This means that there is $1 \leq k \leq \ell$ such that 
$\{x,s_k\}$ is an edge of $G'$. 
Hence, $\{s_1,\ldots,s_\ell\}$ is a maximal independent set. 

\par
On the other hand, take a maximal independent set 
$\{s_1,\ldots,s_\ell\}$ of $G'$. 
Then the corresponding squarefree monomial $v := s_1\cdots s_\ell$ 
does not belong to $J'+I(G')$. 
Also take $x \in \{x_i : 1 \leq i \leq n\} \cup \{y_j,z_j : 1 \leq j \leq m\}$. 
If $x \in \{ s_1, \ldots, s_\ell \}$, then $xv \in J'$. 
Otherwise, $xv \in I(G')$ because 
$\{s_1,\ldots,s_\ell\}$ is a maximal independent set of $G'$. 
Therefore $xv \in J' + I(G')$. 
\end{proof}


\section{Sequentially Cohen--Macaulayness of Cameron--Walker graphs}
In this section, we prove that every Cameron--Walker graph is 
sequentially Cohen--Macaulay. 
Actually, we prove that it is vertex decomposable and thus, shellable. 
We also provide a shelling for a Cameron--Walker graph 
whose supporting connected bipartite graph is a complete bipartite graph. 

The following theorem is the main result in this section. 
\begin{theorem}
  \label{seqCM}
  Every Cameron--Walker graph is vertex decomposable, 
  in particular, shellable and sequentially Cohen--Macaulay. 
\end{theorem}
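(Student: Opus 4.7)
My plan is to prove vertex decomposability by strong induction on $|V(G)|$, working within the larger class $\mathcal{C}$ of graphs each of whose connected components is one of the following: an isolated vertex, $K_2$, $K_3$, a star, a star triangle, or a Cameron--Walker graph. Once vertex decomposability is established, shellability and sequential Cohen--Macaulayness follow from the implications recorded in the excerpt. The reduction to connected components is valid because the independence complex of a disjoint union is the join of the individual independence complexes and the join of vertex decomposable complexes is vertex decomposable.

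For the auxiliary component types, isolated vertices, $K_2$, and $K_3$ have independence complexes that are (boundaries of) simplices, and stars and star triangles are chordal, hence vertex decomposable by Woodroofe's theorem, or directly using the central vertex as a shedding vertex. So the essential case is a connected Cameron--Walker graph $G$ with vertex partition $[n]\sqcup[m]$ of its supporting bipartite graph. Choose any $x_0 \in [n]$ and a leaf $\ell$ attached to $x_0$. Then I claim $x_0$ is a shedding vertex of $\Delta(\overline G)$: any face $F$ of $\link_{\Delta}(x_0)$ is an independent set of $G \setminus N[x_0]$, hence $\ell \notin F$; since the only neighbor of $\ell$ in $G$ is $x_0$, the vertex $\ell$ is isolated in $G \setminus x_0$, so $F \cup \{\ell\}$ is still independent there and $F$ cannot be a facet of $\Delta \setminus x_0$.

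It remains to verify that the connected components of both $G\setminus x_0$ and $G \setminus N[x_0]$ belong to $\mathcal{C}$. In $G\setminus x_0$, the leaves of $x_0$ become isolated vertices; each $y_j \in [m]$ whose only bipartite neighbor was $x_0$ forms, together with its pendant triangles, an isolated vertex, a $K_3$, or a star triangle; and each remaining connected component is either a star (when the surviving bipartite part is $K_{1,t}$ centered at a single $[n]$-vertex with no pendant triangle attached) or a smaller Cameron--Walker graph. In $G\setminus N[x_0]$, the base pairs $\{z_j,w_j\}$ of pendant triangles at each $y_j \in N(x_0)\cap[m]$ become isolated $K_2$'s; any $x_i \ne x_0$ whose entire $[m]$-neighborhood was contained in $N(x_0)$ becomes the center of a star with its remaining leaves; and the other components are smaller Cameron--Walker graphs or stars. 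Each component has strictly fewer vertices than $G$ and lies in $\mathcal{C}$, so the induction hypothesis closes the argument.

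The main obstacle is the case analysis of the components, particularly verifying that each ``surviving'' bipartite-plus-attachments component is genuinely Cameron--Walker (or falls into one of the auxiliary types). The critical structural observations are that only leaves attached to $x_0$ are removed, so every remaining $x_i \in [n]$ retains all its original leaves, and that the pendant triangles have the rigid shape $(y_j,z_j,w_j)$ with $z_j,w_j$ of degree $2$ in $G$, so deleting the apex $y_j$ produces precisely an isolated $K_2$ rather than some more complicated fragment that might escape $\mathcal{C}$.
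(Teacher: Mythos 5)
Your proof is correct and follows essentially the same route as the paper's: induction using a leaf-neighbor as the shedding vertex, reduction to connected components, and Woodroofe's no-long-chordless-cycle criterion for the degenerate cases. Your explicit inclusion of stars and star triangles in the inductive class $\mathcal{C}$ is in fact slightly more careful than the paper's own list of possible components, which omits these (easy) cases.
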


To prove Theorem \ref{seqCM}, we use the following. 
\begin{lemma}[{Woodroofe \cite[Theorem 1]{Woodroofe09}}]
  \label{Woodroofe}
  Let $G$ be a graph with no chordless cycles of length other than $3$ or $5$. 
  Then $G$ is vertex decomposable. 
\end{lemma}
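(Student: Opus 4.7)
The plan is to prove every Cameron--Walker graph $G$ is vertex decomposable by strong induction on $|V(G)|$; shellability and sequential Cohen--Macaulayness then follow from the implications recalled just before the theorem statement.

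First I would analyze the chordless cycles of $G$. Any chordless cycle of $G$ is either a pendant triangle (of length $3$) or a chordless cycle lying entirely in the connected bipartite core $B$ of $G$. Indeed, each leaf has degree $1$ and so lies on no cycle, and for each pendant triangle $\{y_j, z_j, w_j\}$ the degree-$2$ vertex $z_j$ is adjacent only to $y_j$ and $w_j$, which are already adjacent, so the only cycle through $z_j$ is the triangle itself. Consequently, if $B$ is acyclic then every chordless cycle of $G$ has length $3$, and Lemma \ref{Woodroofe} immediately gives vertex decomposability of $G$. This handles the smallest Cameron--Walker graphs (where $B$ is a single edge) and serves as the base of the induction.

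For the inductive step, suppose that $B$ contains a cycle. Choose a vertex $x \in [n]$ together with a leaf $\ell$ attached to $x$. Since $N_G(\ell) = \{x\}$ is trivially a clique, $\ell$ is simplicial in $G$, and a standard consequence of the shedding vertex analysis in \cite{Woodroofe09} shows that $x$ is a shedding vertex of $\Delta(\overline{G}) = \Ind(G)$. Hence it suffices to check that both $\Ind(G \setminus x)$ and $\Ind(G \setminus N[x])$ are vertex decomposable. Each of $G \setminus x$ and $G \setminus N[x]$ has strictly fewer vertices than $G$, and I would analyze their connected components, showing that each component falls into one of three types: (i) a Cameron--Walker graph on strictly fewer vertices, to which the inductive hypothesis applies; (ii) a star or a star triangle, each seen to be vertex decomposable by taking the central vertex as shedding vertex; or (iii) an isolated vertex or an isolated edge, trivially vertex decomposable. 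Since the independence complex of a disjoint union of graphs is the simplicial join of the individual independence complexes, and the join of vertex-decomposable complexes is again vertex decomposable, both $\Ind(G \setminus x)$ and $\Ind(G \setminus N[x])$ are vertex decomposable, completing the induction.

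The main obstacle is the component analysis in the inductive step. Removing $x$ can isolate some $y \in [m]$ from the bipartite core, leaving its pendant triangles as a star-triangle component, while removing $N[x]$ further deletes all $[m]$-neighbors of $x$ and causes the pendant triangles attached to them to collapse into isolated edges hanging off of nothing. One must track these effects carefully to confirm that every resulting component belongs to one of the types (i)--(iii), and to verify the non-facet condition (ii) from the definition of vertex decomposability at the chosen $x$; both verifications are routine but require a small case analysis.
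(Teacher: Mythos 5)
There is a genuine gap: your proposal does not prove the statement in question. Lemma \ref{Woodroofe} is a theorem about an \emph{arbitrary} finite simple graph all of whose chordless cycles have length $3$ or $5$ --- a class that contains, for instance, $C_5$ and every chordal graph, and that does \emph{not} contain most Cameron--Walker graphs (the bipartite part of a Cameron--Walker graph typically contains chordless cycles of length $4$, $6$, etc.). What you have written is instead an argument that every Cameron--Walker graph is vertex decomposable, which is Theorem \ref{seqCM} of the paper, a logically incomparable statement. Worse, your base case explicitly invokes Lemma \ref{Woodroofe} as a black box (``Lemma \ref{Woodroofe} immediately gives vertex decomposability of $G$''), so even reinterpreted charitably the argument is circular as a proof of that lemma.

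For the record, the paper does not prove Lemma \ref{Woodroofe} at all; it is quoted from Woodroofe \cite[Theorem 1]{Woodroofe09}. Woodroofe's proof is a different induction: one checks that the class of graphs with no chordless cycles of length other than $3$ or $5$ is closed under deleting a vertex and under deleting a closed neighbourhood, and then exhibits a shedding vertex directly from the cycle structure (with the $5$-cycles requiring separate care); none of this uses the leaf/pendant-triangle structure of Cameron--Walker graphs. Your component analysis --- induction on $n$, shedding at a vertex of $[n]$ carrying a leaf, and classifying the components of $G \setminus x$ and $G \setminus N[x]$ --- is essentially the paper's proof of Theorem \ref{seqCM}, so it is a reasonable argument, just for a different statement than the one you were asked to prove.
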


The following is a rephrase of the definition of the vertex decomposability of graphs. 

\begin{lemma}[{cf. \cite[Lemma 4]{Woodroofe09}}]\label{vd2}
A finite simple graph $G$ is vertex decomposable if and only if 
$G$ is totally disconnected (i.e., $G$ has no edge), or if 
there is a vertex $v$ of $G$ such that 
\begin{itemize}
\item[(i)'] $G \setminus v$ and $G \setminus N[v]$ are vertex decomposable and 
\item[(ii)'] no independent set in $G \setminus N[v]$ is a maximal independent set in $G \setminus v$. 
\end{itemize}
\end{lemma}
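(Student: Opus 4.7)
The lemma is a straightforward translation of the simplicial-complex definition of vertex decomposability, applied to $\Delta(\overline{G})$, into graph-theoretic language. The conceptual content is the identification $\Delta(\overline{G}) = \Ind(G)$, the independence complex of $G$: a subset $\sigma \subseteq V(G)$ is a clique of $\overline{G}$ if and only if $\sigma$ is an independent set of $G$. In particular, the vertex set of $\Delta(\overline{G})$ coincides with $V(G)$, since every singleton is independent, so the quantifier ``there exists a vertex $x$ of $\Delta$'' in the simplicial definition corresponds exactly to ``there exists a vertex $v$ of $G$'' in the lemma.

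\medskip

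The key step is to record the following dictionary at a fixed vertex $v \in V(G)$:
\[
\Delta(\overline{G}) \setminus v \,=\, \Delta(\overline{G \setminus v}), \qquad \link_{\Delta(\overline{G})}(v) \,=\, \Delta(\overline{G \setminus N[v]}).
\]
The first equality is immediate, since the faces of $\Delta(\overline{G})$ not containing $v$ are exactly the independent sets of the induced subgraph $G \setminus v$. For the second, note that $\sigma \cup \{v\}$ is independent in $G$ if and only if $\sigma$ is independent and $\sigma \cap N(v) = \emptyset$, which is equivalent to $\sigma$ being an independent set of the induced subgraph $G \setminus N[v]$. In addition, the facets of $\Delta(\overline{H})$ for any graph $H$ are precisely the maximal independent sets of $H$. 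Under these identifications, condition (i) of the simplicial-complex definition becomes (i)$'$ of the lemma, and condition (ii) --- no face of $\link_{\Delta(\overline{G})}(v)$ is a facet of $\Delta(\overline{G}) \setminus v$ --- becomes (ii)$'$ verbatim.

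\medskip

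For the base case, $\Delta(\overline{G})$ is a simplex if and only if its unique maximal face is $V(G)$ itself, if and only if $V(G)$ is an independent set of $G$, if and only if $G$ has no edges; this matches the totally disconnected alternative in the lemma. There is no substantive obstacle: the proof is pure definition-unwinding through the clique / independent-set duality, and the only bookkeeping point is to confirm that the vertex sets of $\Delta(\overline{G})$ and $G$ agree, which holds because every singleton of $V(G)$ is an independent set.
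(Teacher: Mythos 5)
Your proof is correct and is exactly the intended justification: the paper states this lemma as a definitional rephrasing (citing Woodroofe's Lemma 4) without writing out a proof, and the content is precisely the dictionary $\Delta(\overline{G})\setminus v=\Delta(\overline{G\setminus v})$, $\link_{\Delta(\overline{G})}(v)=\Delta(\overline{G\setminus N[v]})$, facets $=$ maximal independent sets, and simplex $=$ totally disconnected, all of which you verify cleanly.
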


Now we prove Theorem \ref{seqCM}. 
\begin{proof}[Proof of Theorem \ref{seqCM}]
Let $G$ be a Cameron--Walker graph on the vertex set $V$ whose supporting connected bipartite graph 
has a vertex partition $[n] \sqcup [m]$ such that there is at least one leaf edge 
attached to each vertex $i \in [n]$ and that there may be possibly some pendant triangles 
attached to each vertex $j \in [m]$. 
We prove the assertion by induction on $n$. 

When $n=1$, the supporting bipartite graph is a star. 
Thus $G$ contains no cycle except for pendant triangles. 
Hence, by Lemma \ref{Woodroofe}, $G$ is vertex decomposable. 

Assume that $n>1$. 
We take the vertex $n \in [n]$ 
and consider the graphs $G_1=G \setminus n$ and $G_2 = G \setminus N[n]$. 
Since a union of an independent set of $G \setminus N[n]$ and a leaf adjacent to $n$ 
is an independent set of $G \setminus n$, the condition (ii)' in Lemma \ref{vd2} is satisfied with $v=n$. 
Therefore, in order to prove that $G$ is vertex decomposable, it is sufficient to prove that 
$G_1$ and $G_2$ are vertex decomposable by Lemma \ref{vd2}. 
Also, in order to prove that $G_1$ and $G_2$ are vertex decomposable, it is sufficient 
to prove that all of their connected components are vertex decomposable (see \cite[Lemma 20]{Woodroofe09}). 
Each of connected components of $G_1$ and $G_2$ is one of 
the following four graphs: 
\begin{itemize}
\item[(i)] an isolated vertex; 
\item[(ii)] an edge; 
\item[(iii)] a triangle; 
\item[(iv)] a Cameron--Walker graph whose supporting connected 
bipartite graph has a vertex partition 
$[n'] \sqcup [m']$ with $n'<n$ and $m' \leq m$. 
\end{itemize}
Clearly, the first three graphs (i), (ii) and (iii) are vertex decomposable. 
Moreover, by the inductive hypothesis, the graph (iv) is also vertex decomposable, as desired. 
\end{proof}

\begin{remark}
In \cite{BC}, Biyiko\u{g}lu and Civan also treat the graph $G$ satisfying $im(G)=m(G)$. 
They prove that $G$ is codismantlable if $im(G)=m(G)$ (\cite[Theorem 3.7]{BC}). 
They study the relation between the codismantlability and the vertex decomposability of graphs. 
For more precise information, see \cite[Section 2]{BC}. 
\end{remark}

\bigskip

Next we consider the Cameron--Walker graphs whose supporting connected 
bipartite graphs are complete bipartite graphs. 
We provide a shelling for these graphs though we have already known that 
these are shellable by Theorem \ref{seqCM}.


\par
Let $G$ be a Cameron--Walker graph on the vertex set $V$ whose supporting 
connected bipartite graph is $K_{n,m}$ with vertex partition $[n]\sqcup[m]$ 
such that out of each vertex $i\in[n]$ there is at least one leaf and out of each vertex $j\in[m]$ there may be a pendant triangle. Assume that there are $f$ leaves in total and out of each vertex $i\in[n]$ there are $f_i$ leaves so that $\sum_{i=1}^nf_i=f$. Furthermore assume that there are $t$ total pendant triangles and that out of each vertex $j\in[m]$ there are $t_j\geq 0$ triangles so that $t=\sum_{j=1}^mt_j$. Let $m'\leq m$ be the number of vertices in $[m]$ with at least one adjacent pendant triangle. Then by relabelling we may assume $t_j=0$ for $j>m'$.
We will use notation by referring to vertices of $G$ as in Figure 2. 

\bigskip

\begin{center}
\includegraphics[scale=1.2]{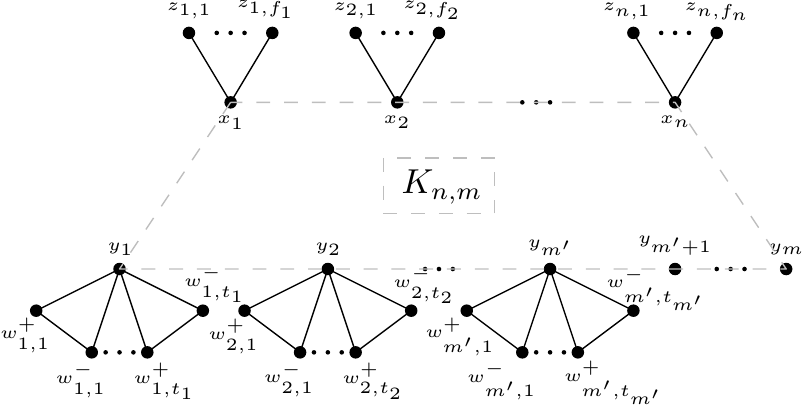}

\bigskip
{\bf Figure 2} (the vertices of $G$)

\end{center}

\bigskip


Since the vertices $x_1,\ldots,x_n,y_1,\ldots,y_m$ form a complete bipartite graph $K_{n,m}$, 
it is easy to check that the facets of the clique complex 
$\Delta(\overline{G})$ are of one of the following two forms; 
note that a clique of $\overline{G}$ 
is equivalent to an independent set in $G$: 
\begin{enumerate}[(I)]
\item $\displaystyle \{y_{m'+1},\dots,y_m\}\cup\{z_{k,l} : 1\leq k \leq n, \, 1\leq l \leq f_k\}\cup\{y_i : i \in I \} \cup \{w_{i',j}^{\epsilon_{i',j}} : i'\in[m'] \setminus I, \, 1\leq j \leq t_{i'}\}$ 
where $I \subset [m']$ and $\epsilon_{i',j}\in\{+,-\}$ for each $i', j$; 
\item $\displaystyle \{x_j : j \in J \} \cup \{ z_{j',i} : j'\in[n]\setminus J, \, 1\leq i \leq f_{j'}\} \cup \{w_{k,l}^{\epsilon_{k,l}} : 1\leq k\leq m', \, 1\leq l \leq t_k\}$ 
where $\emptyset \neq J \subset [n]$ 
and $\epsilon_{k,l}\in\{+,-\}$ for each $k, l$. 
\end{enumerate}
Note that each $I\subset[m']$ defines a family of facets of the first form, which we will denote $\mathcal{F}_I$. Similarly, each $\emptyset \neq J\subset[n]$ defines a family of the second form, $\mathcal{G}_J$. For a fixed $I\subset[m']$ note that all facets $F\in\mathcal{F}_I$ contain 
$\{y_{m'+1},\dots,y_m \} \cup \{ z_{k,l} : 1\leq k\leq n, \, 1\leq l\leq f_k\}$ and so $F$ is determined completely by the  $w_{i',j}^{\epsilon_{i',j}}$ in $F$. Thus we have
\[
\mathcal{F}_I=\{F_\nu ~:~ \nu\in\{+,-\}^{s_I}\}
\] 
where $s_I=\sum_{i\in[m']\setminus I}t_i.$
Similarly, we have for $\emptyset \neq J \subset [n]$
\[
\mathcal{G}_J=\{G_\nu ~:~ \nu\in\{+,-\}^t\}.
\]

We first order facets within $\mathcal{F}_I$ (resp.\  $\mathcal{G}_J$). 
Consequently, we have that the simplicial complex with facets $\mathcal{F}_I$ 
(resp.\  $\mathcal{G}_J$) is shellable. 
To do this, we determine a total order on $\{\nu : \nu \in \{+,-\}^{s_I}\}$ 
which will induce the order for our shelling. 
Let $\nu_+$ be the number of $+$'s in $\nu$ and $\nu^{(k)}$ denote the $k^\text{th}$ entry of $\nu$. We say that $\nu<\omega$ if and only if either of the following two conditions hold
\begin{enumerate}[(i)]
\item $\nu_+<\omega_+$, or 
\item $\nu_+=\omega_+$, $\nu^{(d)}=+$, and $\omega^{(d)}=-$ where $d$ is the first entry $\nu$ and $\omega$ differ when reading from the left.
\end{enumerate} 
For example
\[
(-,-,-)<(+,-,-)<(-,+,-)<(-,-,+)<(+,+,-)<\cdots<(+,+,+).
\]
\begin{lemma}\label{FamilyShell}
  For each fixed $I\subset[m']$, 
  \[
  F_{\nu_1}, F_{\nu_2}, \dots, F_{\nu_{2^{s_I}}}
  \]
  is a shelling for the simplicial complex 
  with facets $\mathcal{F}_I$, 
  where $\nu_{k}>\nu_{k+1}$ for all $1\leq k\leq 2^{s_I}-1$. 

  A similar order on $\{ \nu : \nu \in \{ +, - \}^t \}$ gives a shelling 
  for the simplicial complex with facets $\mathcal{G}_J$ 
  for each fixed $\emptyset \neq J \subset [n]$. 
\end{lemma}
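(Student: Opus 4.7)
The plan is to check the standard shellability criterion in its equivalent form: for every $1 < k \leq 2^{s_I}$ and every $j < k$, one must produce $j' < k$ and a single vertex $v \in F_{\nu_k}$ with
\[
F_{\nu_j} \cap F_{\nu_k} \;\subseteq\; F_{\nu_{j'}} \cap F_{\nu_k} \;=\; F_{\nu_k} \setminus \{v\}.
\]
First I would isolate the common ``core'' of the family $\mathcal{F}_I$, namely
\[
C_I \;=\; \{y_{m'+1},\dots,y_m\} \,\cup\, \{z_{k,l} : 1\leq k\leq n,\, 1\leq l\leq f_k\} \,\cup\, \{y_i : i \in I\},
\]
so that $F_\nu = C_I \cup \{w_{i',j}^{\nu^{(i',j)}} : i' \in [m'] \setminus I,\, 1 \leq j \leq t_{i'}\}$. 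Since $w_{i',j}^{+}$ and $w_{i',j}^{-}$ are distinct vertices for each pair $(i',j)$, the intersection $F_\nu \cap F_\omega$ is exactly $C_I$ together with the $w_{i',j}^{\nu^{(i',j)}}$ at those coordinates where $\nu$ and $\omega$ agree. In particular, the codimension of $F_\nu \cap F_\omega$ inside $F_{\nu_k}$ equals the Hamming distance between $\nu$ and $\omega$, and the shelling criterion reduces to a purely combinatorial statement on $\{+,-\}^{s_I}$.

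That combinatorial statement is: for every $\nu < \nu_k$, there exists $\nu' < \nu_k$ differing from $\nu_k$ in a single coordinate at which $\nu$ also disagrees with $\nu_k$. I would verify it by splitting along the two clauses of the order. If $\nu_+ < (\nu_k)_+$, then some coordinate $d$ satisfies $\nu^{(d)}=-$ and $\nu_k^{(d)}=+$; flipping $\nu_k^{(d)}$ to $-$ produces $\nu'$ with $\nu'_+ = (\nu_k)_+ - 1$, so $\nu' < \nu_k$ by clause (i). If instead $\nu_+ = (\nu_k)_+$, then having equal pluses in two distinct vectors forces some coordinate $e$ with $\nu_k^{(e)}=+$ and $\nu^{(e)}=-$; flipping that coordinate again drops the plus-count by one, giving $\nu' < \nu_k$. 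In both cases the flipped coordinate witnesses a disagreement between $\nu$ and $\nu_k$, so $F_\nu \cap F_{\nu_k} \subseteq F_{\nu'} \cap F_{\nu_k}$ and the latter has codimension one in $F_{\nu_k}$, exactly as required. Purity of the intersection complex is automatic because every $F_\nu \in \mathcal{F}_I$ has cardinality $|C_I| + s_I$.

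The argument for $\mathcal{G}_J$ is identical after renaming: the common core becomes $\{x_j : j \in J\} \cup \{z_{j',i} : j' \in [n] \setminus J,\, 1 \leq i \leq f_{j'}\}$, the remaining coordinates are parametrized by $\{+,-\}^t$, and the same order together with the same sign-flipping step produces the shelling. The main obstacle, modest as it is, lies in the bookkeeping: cleanly separating the common core from the sign-indexed tail in each family, and noting that $w_{k,l}^{+}$ and $w_{k,l}^{-}$ are honestly distinct vertices so that the sign vector $\nu$ is the only parameter distinguishing facets. Once the reduction to Hamming distance on $\{+,-\}^{s_I}$ (respectively $\{+,-\}^t$) is in place, the verification of the order-flipping step is a short case check.
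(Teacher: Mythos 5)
Your overall strategy---reduce everything to the common core $C_I$ plus the sign-indexed tail, observe that $\mathrm{codim}(F_\nu\cap F_\omega)$ equals the Hamming distance between $\nu$ and $\omega$, and then produce a codimension-one witness by flipping a single coordinate of $\nu_k$ at which the earlier facet disagrees---is exactly the paper's approach. But the direction of your order comparisons is reversed throughout, and this breaks the key step. The lemma orders the facets so that $\nu_1>\nu_2>\cdots>\nu_{2^{s_I}}$, i.e.\ the facets appearing \emph{earlier} in the shelling have \emph{larger} sign vectors (more $+$'s come first). So for $j<k$ you have $\nu_j>\nu_k$, and the witness $\nu'=\nu_{j'}$ with $j'<k$ must also satisfy $\nu'>\nu_k$. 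You instead translate $j<k$ as $\nu<\nu_k$ and seek $\nu'<\nu_k$; consequently your case split ($\nu_+<(\nu_k)_+$ or $\nu_+=(\nu_k)_+$) omits the generic case $\nu_+>(\nu_k)_+$ that actually occurs for earlier facets, and in both of your cases you flip a $+$ of $\nu_k$ down to $-$, producing $\nu'$ with $\nu'_+=(\nu_k)_+-1$. That facet $F_{\nu'}$ appears \emph{after} $F_{\nu_k}$ in the stated shelling order, so it cannot serve as the required earlier facet, and the shelling condition is not verified.

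The fix is the mirror image of what you wrote, and it is what the paper does: given $j<k$, i.e.\ $\nu_j>\nu_k$, there must exist a coordinate $u$ at which $\nu_j^{(u)}=+$ and $\nu_k^{(u)}=-$ (if $\nu_k$ had $+$ at every coordinate where the two differ, then $\nu_k$ would have strictly more $+$'s and hence $\nu_k>\nu_j$, a contradiction). Flipping that single $-$ of $\nu_k$ up to $+$ yields $\nu'$ with $\nu'_+=(\nu_k)_++1$, hence $\nu'>\nu_k$ by clause (i), so $F_{\nu'}$ precedes $F_{\nu_k}$; and since $u$ is a coordinate where $\nu_j$ and $\nu_k$ already disagree, $F_{\nu_j}\cap F_{\nu_k}\subseteq F_{\nu'}\cap F_{\nu_k}=F_{\nu_k}\setminus\{w_u^{-}\}$. (Your version would be correct for the \emph{increasing} order on $\{+,-\}^{s_I}$, which is also a shelling by the $+\leftrightarrow-$ symmetry of the complex, but that is not the order asserted in the lemma.) The rest of your write-up---the purity observation, the Hamming-distance reduction, and the identical treatment of $\mathcal{G}_J$ with $\{+,-\}^t$---is fine once this direction error is corrected.
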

\begin{proof}
We first note that all $F_{\nu_i} \in \mathcal{F}_I$ have the same dimension, 
denoted by $d_I$. 
To show the assertion, we need merely verify that 
\[
\left( \bigcup_{k=1}^{l-1} \langle F_{\nu_k} \rangle \right) \cap \langle F_{\nu_l} \rangle = 
\bigcup_{k=1}^{l-1}\langle F_{\nu_k} \cap F_{\nu_l} \rangle
\]
is a pure simplicial complex of dimension $d_I-1$. 

Suppose that $1\leq k<l\leq 2^{s_I}$ such that $\dim (F_{\nu_k}\cap F_{\nu_l}) < d_I-1$. Let $R$ be a subset of $[s_I]$ such that $\nu_k^{(r)}=\nu_l^{(r)}$ if and only if $r\in R$. Note that $0\leq |R|\leq s_I-2$. Clearly $\nu_l^{(u)}=-$ for some $u\in[s_I]\setminus R$ otherwise we would not have $\nu_k>\nu_l$. 
Let $k'<k$ be the index such that $\nu_{k'}^{(u)}=+$ and agrees with $\nu_l$ in every other entry. Then $F_{\nu_l}\cap F_{\nu_k}\subset F_{\nu_l}\cap F_{\nu_{k'}}$ and $\dim(F_{\nu_l}\cap F_{\nu_{k'}})=d_I-1.$

A similar argument shows the assertion for $\mathcal{G}_J$. 
\end{proof}

Now we define an order on the all facets of $\Delta (\overline{G})$. 

\par
We define an order on the families of facets, $\mathcal{F}_I$ with $I\subset[m']$ (respectively $\mathcal{G}_J$ with $\emptyset \neq J\subset[n]$) by imposing a total order on the subsets $I\subset[m']$ (respectively $\emptyset \neq J\subset[n]$). For any subsets $I,L\subset[m']$, we say that $L<I$ if and only if either $|I|<|L|$ or $|I|=|L|$ and the first non-zero entry of the vector
\[
\sum_{i\in I}\eb_i-\sum_{l\in L}\eb_l
\]
reading from left to right is a $1$. Here we take $\eb_i \in \nR^{m'}$ to be the standard basis vectors.
For example
\[
\{1,3,4\}<\{1,2,3\}<\{2,4\}<\{5\}<\emptyset.
\]
We define a similar order for all subsets $\emptyset \neq J\subset[n]$. 


\begin{proposition}\label{shellable}
Let $G$ be a Cameron--Walker graph whose supporting connected bipartite graph 
is a complete bipartite graph. 
Then 
\[
\mathcal{F}_\emptyset,\mathcal{F}_{\{1\}},\mathcal{F}_{\{2\}},\dots,\mathcal{F}_{[m']},\mathcal{G}_{\{1\}},\mathcal{G}_{\{2\}},\dots,\mathcal{G}_{[n]}
\]
is a shelling of $\Delta (\overline{G})$, 
where the order of the indexing sets is given by the order defined above and the order within each family $\mathcal{F}_I$ or $\mathcal{G}_J$ is given by the shelling order in Lemma \ref{FamilyShell}.
\end{proposition}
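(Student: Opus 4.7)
The plan is to verify the standard shelling criterion directly: for every pair of facets $F_i,F_j$ with $F_i$ listed before $F_j$, I will exhibit a facet $F_k$ also listed before $F_j$ together with a vertex $x\in F_j$ such that
\[
F_i\cap F_j\ \subset\ F_k\cap F_j\ =\ F_j\setminus\{x\}.
\]
Whenever $F_i$ and $F_j$ lie in the same family $\mathcal{F}_I$ or $\mathcal{G}_J$, Lemma \ref{FamilyShell} already supplies such an $F_k$ inside that family, so the task reduces to producing witnesses for cross-family pairs. The key observation is that the indexing sets are listed in order of increasing cardinality, so removing a single element from $I$ (respectively from $J$) lands in a strictly earlier family.

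Suppose first that $F_j=F_\nu\in\mathcal{F}_I$ and $F_i=F_{\nu'}\in\mathcal{F}_{I'}$ with $I'$ preceding $I$ and $I'\neq I$. The ordering forces $|I'|\leq|I|$, which together with $I'\neq I$ gives $I\setminus I'\neq\emptyset$; pick $i_0\in I\setminus I'$ and take as witness the facet $F_k$ of $\mathcal{F}_{I\setminus\{i_0\}}$ whose $\epsilon$-vector agrees with that of $F_\nu$ on every index in $[m']\setminus I$. A direct comparison shows $F_k\cap F_j=F_j\setminus\{y_{i_0}\}$, and because $i_0\notin I'$ one has $y_{i_0}\notin F_i$, whence $F_i\cap F_j\subset F_k\cap F_j$. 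Moreover $\mathcal{F}_{I\setminus\{i_0\}}$ precedes $\mathcal{F}_I$ since $|I\setminus\{i_0\}|<|I|$.

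For $F_j=G_\mu\in\mathcal{G}_J$ an analogous argument handles all cases in which $|J|\geq 2$: if $F_i\in\mathcal{F}_I$ pick any $j_0\in J$, and if $F_i=G_{\mu'}\in\mathcal{G}_{J'}$ with $J'\neq J$ precedes pick $j_0\in J\setminus J'$; in either case take as $F_k$ the facet of $\mathcal{G}_{J\setminus\{j_0\}}$ whose $\epsilon$-vector matches that of $G_\mu$, obtaining $F_k\cap F_j=F_j\setminus\{x_{j_0}\}\supset F_i\cap F_j$ because $x_{j_0}\notin F_i$. The exceptional case $|J|=1$, say $J=\{j_0\}$, requires a different witness: I take instead the unique facet of $\mathcal{F}_\emptyset$ whose $\epsilon$-vector matches that of $G_\mu$, which precedes $F_j$ because all $\mathcal{F}$-families come before any $\mathcal{G}$-family and which still satisfies $F_k\cap F_j=F_j\setminus\{x_{j_0}\}$. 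The main technical points are verifying that each proposed $F_k\cap F_j$ has codimension exactly one in $F_j$ rather than smaller—this is precisely why matching the $\epsilon$-vectors on the appropriate indices is essential, since any mismatched entry would drop the dimension further—and handling the boundary $|J|=1$, where no cross-family type-(II) witness is available and one must reach back into $\mathcal{F}_\emptyset$.
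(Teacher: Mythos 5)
Your proof is correct and follows essentially the same route as the paper: within-family pairs are delegated to Lemma \ref{FamilyShell}, and cross-family pairs are handled by deleting one element from the later indexing set (matching the sign vector) to produce a codimension-one witness in an earlier family, falling back to $\mathcal{F}_\emptyset$ when $|J|=1$. If anything, you are slightly more explicit than the paper in routing the case of two distinct singleton families $\mathcal{G}_{\{j\}},\mathcal{G}_{\{l\}}$ through $\mathcal{F}_\emptyset$, where the paper's ``proceed as above'' would formally yield the nonexistent family $\mathcal{G}_\emptyset$.
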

\begin{proof}
Let $I,L\subset[m']$ with $I>L$ and let $F_\nu\in\mathcal{F}_I$ and $F_\omega\in\mathcal{F}_L$ such that $\dim(F_\nu\cap F_\omega)<\dim F_\omega-1=d_L-1$. We then proceed as in the proof of Lemma \ref{FamilyShell} to show that there exists $F_{\nu'}\in\mathcal{F}_{I'}$ with $I' > L$ such that $F_\nu\cap F_\omega\subset F_{\nu'}\cap F_\omega$ and $\dim (F_{\nu'}\cap F_\omega) = d_L-1$. 
Since $I > L$, one has $L \not\subset I$, i.e., $L \setminus I \not= \emptyset$. Choose $k \in L \setminus I$ and set $I'=L \setminus \{k\}$. 
Thus we have $|I'|=|L|-1$. Take a vector $\bar{\omega}\in\{+,-\}^{s_{I'}}$ as an extension of $\omega\in\{+,-\}^{s_L}$. 
Then $F_{\bar{\omega}}\cap F_{\omega}=F_{\omega}\setminus\{y_k\}.$ Furthermore, our choice of $k$ guarantees that $F_\nu\cap F_\omega\subset F_{\bar{\omega}}\cap F_\omega.$ A similar argument holds for $\emptyset \neq J,L\subset[n]$ with $J>L$, $G_\nu\in\mathcal{G}_J$, and $G_\omega\in\mathcal{G}_L$. In this case we consider $G_\omega\in\mathcal{G}_{J'}$ where $J'=L\setminus\{k\}$ for some $k \in L \setminus J$ and proceed as above.

Finally, let $I\subset[m']$, $\emptyset \neq J\subset[n]$, $F_\nu\in\mathcal{F}_I$, and $G_\omega\in\mathcal{G}_J$ such that $\dim(F_\nu\cap G_\omega)<\dim G_\omega-1$. First we observe that $x_k\not\in F_\nu\cap G_\omega$ for all $k\in[n]$. Similarly $y_l\not\in F_\nu\cap G_\omega$ for all $l\in[m]$. 
If $|J|=1$, i.e., $J=\{j\}$ for some $j\in [n]$, then we consider $F_{\omega}\in\mathcal{F}_\emptyset$ where the subscript $\omega$ for $F_\omega$ denotes the same sign vector as that of $G_\omega$. (Here we note $s_\emptyset = t$.) Then $F_\nu\cap G_\omega\subset F_\omega\cap G_\omega = G_\omega \setminus\{x_j\}$. 
If $|J|>1$, then let $J'\subset J$ such that $|J'|=|J|-1$. It is then easy to check that $F_\nu\cap G_\omega\subset G'_{\omega}\cap G_\omega$ where $G'_{\omega}\in\mathcal{G}_{J'}$ and furthermore $\dim(G'_{\omega}\cap G_\omega)=\dim(G_\omega)-1$.
\end{proof}

We finish the paper by noting that we can describe the projective dimension $\pd (K[V]/I(G))$ 
for a Cameron--Walker graph $G$ on $V$ in terms of the graph. 
Let $i(G)$ be a minimum cardinality of independent sets $A$ with $A \cup N_G(A)=V$. 
Since a Cameron--Walker graph is sequentially Cohen--Macaulay by Theorem \ref{seqCM}, 
we have the following corollary by the results of Dao and Schweig \cite{DS}. 
\begin{corollary}[{\cite[Corollary 5.6 and Remark 5.7]{DS}}]
Let $G$ be a Cameron--Walker graph on the vertex set $V$. Then $\pd(K[V] / I(G))= |V|-i(G)$. 
\end{corollary}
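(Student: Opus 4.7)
The plan is to reduce the claim to a direct application of the Dao--Schweig results cited in the statement. Corollary 5.6 of \cite{DS}, together with Remark 5.7 there, asserts that for any sequentially Cohen--Macaulay graph $H$ on a vertex set $W$ one has
\[
\pd(K[W]/I(H)) = |W| - i(H).
\]
Thus the only thing left to verify is that Cameron--Walker graphs satisfy the sequential Cohen--Macaulayness hypothesis.

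This is precisely what Theorem \ref{seqCM} supplies: every Cameron--Walker graph is vertex decomposable, hence shellable, hence sequentially Cohen--Macaulay. Composing the two statements immediately yields $\pd(K[V]/I(G)) = |V| - i(G)$ for every Cameron--Walker graph $G$ on $V$, which is the corollary.

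No substantive obstacle arises here: the sequential Cohen--Macaulayness is already in hand from Theorem \ref{seqCM}, and the formula from \cite{DS} is invoked as a black box. If one prefers a more self-contained packaging, the Dao--Schweig result can be split into the depth identity $\depth(K[V]/I(G)) = i(G)$ (valid for sequentially Cohen--Macaulay edge ideals) together with the Auslander--Buchsbaum formula $\pd(K[V]/I(G)) = |V| - \depth(K[V]/I(G))$; either route produces the same conclusion.
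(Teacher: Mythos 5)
Your proposal is correct and follows exactly the paper's own (implicit) argument: the authors likewise justify the corollary by combining Theorem \ref{seqCM} (every Cameron--Walker graph is sequentially Cohen--Macaulay) with the Dao--Schweig formula from \cite[Corollary 5.6 and Remark 5.7]{DS}, used as a black box. Nothing further is needed.
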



\end{document}